\documentclass[10]{amsart}
\usepackage{amsmath,amsfonts,amssymb, amsthm}
\usepackage{bbold}
\usepackage[margin=1.5in]{geometry}

\usepackage{amssymb, amsmath, mathptmx}
\usepackage{cite}

\newtheorem{theorem}{Theorem}
\newtheorem{proposition}{Proposition}[section]
\newtheorem{lemma}[proposition]{Lemma}
\newtheorem{corollary}[proposition]{Corollary}

%opening
\title{Cycle Index Sum for Non-$k$-Equal Configurations}
\author{Keely Grossnickle and Victor Turchin}
\address{Department of Mathematics\\
  Kansas State University\\
  %138 Cardwell Hall\\
  Manhatan, KS 66506, USA}
  \email{kgrossni@ksu.edu, turchin@ksu.edu}
\date{}
\begin{document}

\maketitle
\sloppy

\begin{abstract}
We compute the cycle index sum of the symmetric group action on the homology of the configuration spaces of points in a Euclidean space with the condition that no $k$ of them are equal.
\end{abstract}

\section{Introduction} \label{section1}

Let $\mathcal{M}_{d}^{(k)}(n)$ be the configuration space of $n$ labeled points in $\mathbb{R}^{d}$ with the $non-k-equal$ $condition$: no $k$ points coincide. For example, $\mathcal{M}_{d}^{(2)}(n)$ is the usual configuration space of $n$ distinct points in~$\mathbb{R}^{d}$. Bj\"{o}rner and Welker in \cite{B_W} first computed the homology of $\mathcal{M}_{d}^{(k)}(n)$ for $k \geq 3$. Sundaram and Wachs in \cite{S_W} later computed the symmetric group action on the homology of the intersection lattice corresponding to $\mathcal{M}_{d}^{(k)}(n)$; their computations imply the following isomorphism of symmetric sequences:

\begin{equation}
H_{\ast}\mathcal{M}_{d}^{(k)} \simeq Com \circ (\mathbb{1} \oplus (\mathcal{L}ie \circ \mathcal{H}_{1}^{(k)}) \{d-1\} ). \tag{1.1} \label{eq:iso}
\end{equation}
where $\circ$ is the graded composition product for symmetric sequences \cite[Section 2.2.2]{fresse}.\footnote{Explicitly this formula appears in \cite{D_T}} Recall also that $H_{\ast}\mathcal{M}_{d}^{(k)}(n)$ is torsion free. The isomorphism $\eqref{eq:iso}$ holds integrally for $d \geq 2$, $k\geq 3$ and rationally for $d\geq 1$, $k \geq 2 $.  $Com$ and $\mathcal{L}ie$ are the underlying symmetric sequences of the commutative and Lie operads and $\mathcal{H}_{1}^{(k)}$ is the symmetric sequence of hook representations that we describe in section~3. The notation $\{d-1\}$ is the operadic degree $(d-1)$ suspension of symmetric sequences. The symmetric sequence $\mathbb{1}$ is the unit with respect to the composition product. It is a one dimensional space concentrated in arity 1. The cycle index sum of the symmetric group action on $H_{\ast}\mathcal{M}_{d}^{(2)}$, the usual configuration space, was computed in \cite{lehrer, A_T} to be:

\begin{equation}
Z_{H_{\ast}\mathcal{M}_{d}^{(2)}} = \prod_{m=1}^{\infty}\left(1+(-1)^{d}(-q)^{m(d-1)}p_{m}\right)^{(-1)^{d}E_{m}\left(\frac{1}{(-q)^{d-1}}\right)} . \tag{1.2} \label{k2 iso}
\end{equation}

From \eqref{eq:iso}, in \cite{D_T} the exponential generating function of Poncair\'e polynomials for the sequence $H_{\ast}\mathcal{M}_{d}^{(k)}$ is computed to be: 

\begin{equation}
F_{H_{\ast}\mathcal{M}_{d}^{(k)}}(x)=\sum_{n=0}^{\infty}P_{H_{\ast}\mathcal{M}_{d}^{(k)}(n)}(q)\frac{x^{n}}{n!}=e^{x}\left(1-(-q)^{k-2}+(-q)^{k-2}\left(\sum_{j=0}^{k-1}\frac{(-q^{d-1}x)^{j}}{j!}\right)e^{q^{d-1}x} \right)^{-\frac{1}{q^{d-1}}} .\tag{1.3} \label{dim iso}
\end{equation}

The main result of this paper describes the cycle index sum of the symmetric sequence $H_{\ast}\mathcal{M}_{d}^{(k)}$ obtained from the isomorphism \eqref{eq:iso}.

\begin{theorem} For $k \geq 2$, $d\geq 1$
\begin{multline}\label{equ:bigthm} \tag{1.4}
Z_{H_{\ast}\mathcal{M}_{d}^{(k)}}(q;p_{1}, p_{2}, p_{3},...) = \\
=e^{(\sum_{l=1}^{\infty}\frac{p_l}{l})}\prod_{m=1}^{\infty}\Bigg(1-(-q)^{m(k-2)}+\\(-q)^{m(k-2)}\Big(e^{-\sum_{j=1}^{\infty}\frac{(-1)^{d-1}(-q)^{mj(d-1)}p_{mj}}{j}}\Big)_{\leq m(k-1)}\Big(e^{\sum_{j=1}^{\infty}\frac{(-1)^{d-1}(-q)^{mj(d-1)}p_{mj}}{j}}\Big)\Bigg)^{(-1)^{d}E_{m}\Big(\frac{1}{(-q)^{d-1}}\Big)} ,
\end{multline} 
where $\leq m(k-1)$ denotes the truncation with respect to the cardinality degree ($|p_i|=i$) and $E_{m}(y)=\frac{1}{m}\sum_{i\mid m}\mu(i)y^{\frac{m}{i}}$, where $\mu(i)$ is the usual M\"{o}bius function.
\end{theorem}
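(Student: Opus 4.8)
The plan is to apply the cycle index sum $Z(-)$ to the isomorphism \eqref{eq:iso}, using its two structural properties: it is additive on direct sums, and it converts the composition product $\circ$ into plethysm. Write $p_m[-]$ for the plethystic substitution sending $p_i\mapsto p_{mi}$ and $q\mapsto q^m$ (so that $Z_{\mathcal A\circ\mathcal B}$ is $Z_{\mathcal A}$ evaluated plethystically on $Z_{\mathcal B}$). Since $Z_{Com}=\exp\big(\sum_{m\ge1}p_m/m\big)$, $Z_{\mathbb 1}=p_1$ and $p_m[p_1]=p_m$, equation \eqref{eq:iso} gives
\[
 Z_{H_\ast\mathcal M_d^{(k)}}=\exp\Big(\sum_{m\ge1}\tfrac1m\,p_m\big[p_1+Z_Y\big]\Big)
 =e^{\sum_{l\ge1}p_l/l}\cdot\exp\Big(\sum_{m\ge1}\tfrac1m\,p_m[Z_Y]\Big),
\]
where $Y:=(\mathcal{L}ie\circ\mathcal{H}_1^{(k)})\{d-1\}$. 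This isolates the prefactor $e^{\sum_l p_l/l}$ and reduces the theorem to identifying the second exponential with the displayed product over $m$.

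Next I would compute $Z_Y$. By Klyachko's formula $Z_{\mathcal{L}ie}=-\sum_{m\ge1}\tfrac{\mu(m)}{m}\log(1-p_m)$, so plethysm yields $Z_{\mathcal{L}ie\circ\mathcal{H}_1^{(k)}}=-\sum_{m\ge1}\tfrac{\mu(m)}{m}\log\big(1-p_m[Z_{\mathcal{H}_1^{(k)}}]\big)$. I would then translate the operadic suspension $\{d-1\}$ into a substitution on cycle indices: tensoring by $\mathrm{sgn}^{\otimes(d-1)}$ sends $p_i\mapsto(-1)^{(d-1)(i-1)}p_i$, while the homological shift by $(d-1)(n-1)$ contributes $q^{(d-1)(n-1)}$. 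Splitting $n-1=\sum_{\text{cycles}}(i-1)+(r-1)$ over the $r$ cycles of a permutation shows that each $i$-cycle acquires a factor $(-q)^{(d-1)(i-1)}$, together with a global factor depending on the number of cycles; the latter is exactly what produces the argument $1/(-q)^{d-1}$ and the overall sign $(-1)^d$. Feeding these substitutions through the M\"obius sum and collecting the outer plethysm $\sum_m\tfrac1m p_m[-]$ by the total index, the necklace polynomials $E_m(y)=\tfrac1m\sum_{i\mid m}\mu(i)y^{m/i}$ convert the paired coefficients $\tfrac1m\tfrac{\mu(m')}{m'}$ (now $q$-weighted) into the exponents $(-1)^dE_m\big(\tfrac1{(-q)^{d-1}}\big)$, giving a product $\prod_m(\mathrm{base}_m)^{(-1)^dE_m(\cdots)}$.

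To identify $\mathrm{base}_m$ I would compute $Z_{\mathcal{H}_1^{(k)}}$ from the description of the hook sequence in Section~3. Using the hook identity $s_{(n-i,1^i)}=h_{n-i}e_i-h_{n-i+1}e_{i-1}$ together with the generating series $\sum_n h_nz^n=\exp\big(\sum_m p_mz^m/m\big)$ and $\sum_n e_nz^n=\exp\big(\sum_m(-1)^{m-1}p_mz^m/m\big)$, and recording the leg length $i$ (which carries the homological degree, hence a power of $(-q)^{d-1}$), the hook characteristic packages into a product of two exponentials of the form $e^{\pm A_m}$ with $A_m=\sum_{j\ge1}\tfrac{(-1)^{d-1}(-q)^{mj(d-1)}p_{mj}}{j}$. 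The non-$k$-equal condition caps the admissible leg length, which after $p_m$-plethysm becomes the truncation $(-)_{\le m(k-1)}$ and contributes the boundary terms $1-(-q)^{m(k-2)}+(-q)^{m(k-2)}(\cdots)$; this is precisely $\mathrm{base}_m$.

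The step I expect to be the main obstacle is the bookkeeping where the suspension meets the truncation: one must check that the non-multiplicative shift of $\{d-1\}$ and the hook leg-length bound combine to give exactly the truncation degree $m(k-1)$ and the weight $(-q)^{m(k-2)}$, with all Koszul and sign factors consistent (the ubiquitous $(-q)$ in place of $q$ is the residue of this sign accounting). I would pin these down with two independent consistency checks. Setting $p_i\mapsto x\,\delta_{i,1}$ collapses \eqref{equ:bigthm} to the exponential generating function and must reproduce \eqref{dim iso}. Specializing $k=2$, where $\mathcal{H}_1^{(k)}=\mathbb 1$, the base becomes $\big(1+(-1)^d(-q)^{m(d-1)}p_m\big)e^{A_m}$; the Witt identity $\sum_{m\mid l}m\,E_m(y)=y^l$ then forces $e^{\sum_l p_l/l}\exp\big(\sum_m(-1)^dE_m A_m\big)=1$, so that \eqref{equ:bigthm} collapses to \eqref{k2 iso}, confirming the normalization.
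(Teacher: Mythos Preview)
Your strategy is exactly the paper's: apply $Z$ to \eqref{eq:iso}, insert the known $Z_{Com}$ and $Z_{\mathcal{L}ie}$, compute $Z_{\mathcal{H}_1^{(k)}}$, convert the suspension $\{d-1\}$ into a substitution on $p_i$ and $q$, and then regroup the double sum over the $Com$-plethysm index $l$ and the M\"obius index $i$ into a single sum over $m=li$ to produce the exponents $(-1)^dE_m(1/(-q)^{d-1})$. The paper carries this out verbatim in \eqref{lie:hook}--\eqref{com,lie,hook,susp}, with the hook step isolated as Proposition~\ref{prop3.1}.

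Two details in your write-up would derail the computation if taken literally. First, the \emph{graded} plethysm must send $q\mapsto(-1)^{m-1}q^m$, not $q\mapsto q^m$ (cf.\ \eqref{plethysm:2}); this Koszul sign is precisely why every $q$ in the final formula appears as $(-q)$, and without it neither the base nor the exponent $E_m\big(1/(-q)^{d-1}\big)$ comes out right. Second, your hook identity $s_{(n-i,1^i)}=h_{n-i}e_i-h_{n-i+1}e_{i-1}$ is false: by Pieri the right-hand side equals $s_{(n-i,1^i)}-s_{(n-i+2,1^{i-2})}$. The correct expansion is the full alternating sum $s_{(n-i,1^i)}=\sum_{j=0}^{i}(-1)^{j}h_{n-i+j}\,e_{i-j}$, which is exactly Corollary~\ref{3.5} read through the characteristic map, and is what produces the truncated exponential in \eqref{hookprop}. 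Finally, note that in $\mathcal{H}_1^{(k)}$ the leg length is \emph{fixed} at $k-1$ and the homological degree is the constant $k-2$; the factors $(-q)^{mj(d-1)}$ attached to $p_{mj}$ come entirely from the suspension rule \eqref{graded:susp}, not from any variation of leg length.
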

\noindent Most of the computations are straightforward. The main difficult part is computing the cycle index sum for $\mathcal{H}_{1}^{(k)}$, which is done in Section 3. 

It is easy to see that from \eqref{equ:bigthm} one can recover \eqref{k2 iso} by setting $k=2$. Similarly, \eqref{dim iso} can be recovered from \eqref{equ:bigthm} by setting $p_{1}=x$ and $p_{i}=0$ for $i\geq 2$.

We also establish a refinement of Theorem 1. The homology groups of $H_{\ast}\mathcal{M}_{d}^{(k)}$ can be described as linear combinations of certain products of iterated brackets\cite{D_T}. These brackets are of two types: long or short. The number of long, respectively short, brackets represent two additional gradings on the space. The cycle index sum of $H_{\ast}\mathcal{M}_{d}^{(k)}$ can be adjusted with the use of two additional variables to account for these two additional gradings. See Theorem 2 in Section 5.

The formula \eqref{k2 iso} was used in \cite{A_T, ST_T, T} to compute the generating functions for the Euler characteristics of the terms of the Hodge splitting in the rational homology of the spaces of higher dimensional long knots and string links. In the same way, the results of this paper can be used to compute the Euler characteristics of the Hodge splitting in the second term of the Goodwillie-Weiss or Vassiliev spectral sequences for spaces of long non-$k$-equal (string) immersions\cite{D_T}. The differential $d_{1}$ of the above spectral sequences preserves the number of long and short brackets used in the refinement, which is our motivations for Theorem 2.

\subsection*{Acknowledgments}
The authors are thankful to Fr\'{e}d\'{e}ric Chapoton, Vladimir Dotsenko and Anton Khoroshkin for communication.

\section{Notation and Basic Facts about the Cycle Index Sum} \label{section 2}

In this paper we will use the variable $q$ for the formal variable responsible for the homological degree. For $\sigma \in \Sigma_{n}$ we will denote the number of its cycles of length $j$ by $\ell_{j}(\sigma)$. Let $\rho:\Sigma_{n}\rightarrow GL(V)$ be a representation of the symmetric group $\Sigma_{n}$, where $V$ is a graded vector space, and let $(p_{1}, p_{2}, p_{3},...)$ be a family of infinite commuting variables. Then the cycle index sum of $\rho$, denoted $Z_{V}(q;p_{1}, p_{2}, p_{3},...)$, is defined by
\begin{equation} \label{def:cycle}
Z_{V}(q;p_{1}, p_{2}, p_{3},....) = \frac{1}{|\Sigma_{n} |} \sum_{\sigma \in \Sigma_{n}}tr(\rho(\sigma))\prod_{j}p_{j}^{\ell_{j}(\sigma)},\tag{2.1}
\end{equation}
where $tr(\rho(\sigma))$ is the graded trace that is a polynomial of $q$ obtained as the generating function of traces on each component. There is also an auxiliary \textit{cardinality degree} given by $p_{i}$'s where each $p_{i}$ is said to have cardinality degree $i$. Below we recall some facts about the cycle index sum.

Let $V$ be a $\Sigma_{k}$-module and $W$ be a $\Sigma_{n}$-module. Then from \cite[section 6.1; section 3.1, proposition 8, part c; 7.3, respectively]{bergeron, B_T_T, macdonald},
\begin{equation}
Z_{\mathrm{Ind}_{\Sigma_{k}\times\Sigma_{n}}^{\Sigma_{k+n}}(V \otimes W)} = Z_{V} \cdot Z_{W}. \tag{2.2} \label{cis:prod}
\end{equation}
For a symmetric sequence $M(\bullet) = \{M(n), n \geq 0\}$, one defines its cycle index sum as
\begin{equation}
Z_{M}(q;p_{1}, p_{2}, p_{3},...)=\sum_{n=0}^{\infty}Z_{M(n)}(q;p_{1}, p_{2}, p_{3},...) .\tag{2.3} \label{cis:sum}
\end{equation}
For the proof of Theorem 1, we will need the formula for the graded plethysm:

\begin{equation}
Z_{M \circ N} = Z_{M} \ast Z_{N} = Z_{M}(q;p_{i} \mapsto p_{i} \ast Z_{N}) ,\tag{2.4.1}\label{plethysm:1}
\end{equation}
where
\begin{equation}
p_{i}\ast Z_{N} = Z_{N}(q \mapsto (-1)^{i-1}q^{i}; p_{j}\mapsto p_{ij}). \tag{2.4.2}\label{plethysm:2}
\end{equation}

The usual plethysm without the grading can be found in \cite[equation 3.25, section 3.8; definition 3, section 1.4; equation 8.1-8.2, section 8, respectively]{bergeron, B_T_T, macdonald}. For the graded case, it is done when $q=-1$ in \cite[section 7.20]{G_K}. Unfortunately, the graded version of this formula doesn't seem to appear in the literature, though it is known to experts, \cite{C_D_K, mathoverflow}. To prove our formula, we notice that the sign convention is correct and holds when $q=-1$ and the $q$-grading contribution is correct by the same argument as in \cite[Section 3.5, definition 3]{D_K}.

To recall the operadic suspension $\mathcal{M}\{1\}$ of the symmetric sequence $\mathcal{M}$ is defined as $$\mathcal{M}\{1\}(n) = s^{n-1}\mathcal{M}(n)\otimes V_{(1^{n})},$$ where $s^{n-1}$ is the degree $(n-1)$ suspension and $V_{(1^{n})}$ is the sign representation. One can easily see that $$Z_{\mathcal{M}\{1\}} = \frac{1}{q}Z_{\mathcal{M}}(q;p_{i}\mapsto (-1)^{i-1}q^{i}p_{i}).$$ We will use the formula for the $\{d-1\}$ operadic suspension, which is an easy formula to obtain from the above: 
\begin{equation}
Z_{\mathcal{M}\{d-1\}}=(q)^{1-d}Z_{\mathcal{M}}(q;p_{i}\mapsto (-1)^{(i-1)(d-1)}q^{i(d-1)}p_{i}). \tag{2.5} \label{graded:susp}
\end{equation}

Lastly, we will need the cycle index sums of $Com$ and $\mathcal{L}ie$. From \cite{D_K, G_K}, the cycle index sum for $Com$ is
\begin{equation}
Z_{Com} = \exp\left(\sum_{i=1}^{\infty} \frac{p_{i}}{i}\right); \tag{2.6}\label{cis:com}
\end{equation}
and from \cite{brandt, D_K, G_K} , the cycle index sum for $\mathcal{L}ie$ is
\begin{equation}
Z_{\mathcal{L}ie} = \sum_{i=1}^{\infty}\frac{-\mu(i)\ln(1-p_{i})}{i}, \tag{2.7}\label{cis:Lie}
\end{equation}
where, as before and throughout this paper, $\mu(i)$ is the usual M\"{o}bius function. 

We will also use the notation $V_{\lambda}$ to denote the irreducible $\Sigma_{n}$-representation corresponding to the partition $\lambda$, see \cite{fulton}.
\section{Cycle Index Sum for the Sequence of Hooks $\mathcal{H}_{1}^{(k)}$} \label{section3}
We define $\mathcal{H}_{1}^{(k)}(n)$ as a graded $\Sigma_{n}$ module, which is trivial if $n<k$ and
$\mathcal{H}_{1}^{(k)}(n) = s^{k-2}V_{(n-k+1, 1^{k-1})}$ otherwise, where $V_{(n-k+1, 1^{k-1})}$ is the hook representation corresponding to the partition $\lambda = (n-k+1, 1^{k-1})$ and $s^{k-2}$ is the $(k-2)$-suspension. The space $\mathcal{H}_{1}^{(k)}(n)$ is  some natural subspace of $H_{k-2}\mathcal{M}_{1}^{(k)}(n)$, which explains why $\mathcal{H}_{1}^{(k)}(n)$ lies in degree $k-2$, see \cite{D_T}.

\begin{proposition} \label{prop3.1}

For $k \geq 2$,

\begin{equation}
Z_{\mathcal{H}_{1}^{(k)}}(q;p_{1}, p_{2}, p_{3}, ... )=(-q)^{k-2} - (-q)^{k-2}\left(\exp\left(-\sum_{i=1}^{\infty}\frac{p_{i}}{i}\right)\right)_{\leq k-1}\left(\exp\left(\sum_{i=1}^{\infty}\frac{p_{i}}{i}\right)\right), \tag{3.1}\label{hookprop}
\end{equation}
where $\leq k-1$ is the truncation with respect to the cardinality degree.
	
\end{proposition}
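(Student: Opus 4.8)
The plan is to recognize the hook representation as an exterior power of the standard (reflection) representation, encode all exterior powers at once with an auxiliary variable, and obtain the proposition by a single coefficient extraction. Concretely, for the standard $(n-1)$-dimensional representation $\mathrm{std}_{n}=\mathbb{C}^{n}\ominus\mathbb{1}$ of $\Sigma_{n}$ one has $\Lambda^{j}\mathrm{std}_{n}\cong V_{(n-j,1^{j})}$, so taking $j=k-1$ identifies $V_{(n-k+1,1^{k-1})}$ with $\Lambda^{k-1}\mathrm{std}_{n}$, which vanishes for $n<k$ exactly as $\mathcal{H}_{1}^{(k)}(n)$ does. Since the suspension $s^{k-2}$ only shifts homological degree, by the definition \eqref{def:cycle} of the graded trace it multiplies every graded trace by $q^{k-2}$ (this is the same reasoning that yields the operadic factor in \eqref{graded:susp}). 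Hence $Z_{\mathcal{H}_{1}^{(k)}}=q^{k-2}\sum_{n}Z_{\Lambda^{k-1}\mathrm{std}_{n}}$ by \eqref{cis:sum}, and it remains to compute the cycle index sum of the sequence $n\mapsto\Lambda^{k-1}\mathrm{std}_{n}$.

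First I would introduce an auxiliary variable $t$ recording exterior degree and use the determinant identity $\sum_{j}(-t)^{j}\,\mathrm{tr}(\sigma\mid\Lambda^{j}\mathbb{C}^{n})=\det(1-t\,P_{\sigma})=\prod_{j}(1-t^{j})^{\ell_{j}(\sigma)}$, valid because the permutation matrix $P_{\sigma}$ contributes a factor $1-t^{j}$ for each $j$-cycle. Splitting off the trivial summand (for which $\det(1-t)=1-t$) gives, for $n\ge 1$, $\sum_{j}(-t)^{j}\,\mathrm{tr}(\sigma\mid\Lambda^{j}\mathrm{std}_{n})=(1-t)^{-1}\prod_{j}(1-t^{j})^{\ell_{j}(\sigma)}$. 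Averaging over $\Sigma_{n}$ with weight $\prod_{l}p_{l}^{\ell_{l}(\sigma)}$ and summing over $n$ by the exponential formula (the computation behind \eqref{cis:com}) yields the closed form
\[
A(t):=\sum_{n}\frac{1}{n!}\sum_{\sigma}\Bigl(\sum_{j}(-t)^{j}\,\mathrm{tr}\bigl(\sigma\mid\Lambda^{j}\mathrm{std}_{n}\bigr)\Bigr)\prod_{l}p_{l}^{\ell_{l}(\sigma)}=1+\frac{1}{1-t}\Bigl(\exp\bigl(\sum_{j}(1-t^{j})p_{j}/j\bigr)-1\Bigr),
\]
where the leading $1$ and the subtracted $1$ both arise from the exceptional $n=0$ term.

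The heart of the argument is to read off the coefficient of $(-t)^{k-1}$ in $A(t)$, which equals $\sum_{n}Z_{\Lambda^{k-1}\mathrm{std}_{n}}$. Factor the exponential as $\exp(\sum_{j}(1-t^{j})p_{j}/j)=Z_{Com}\cdot\exp(-\sum_{j}t^{j}p_{j}/j)$. The key observation is that the substitution $p_{j}\mapsto t^{j}p_{j}$ in $Z_{Com}^{-1}=\exp(-\sum_{j}p_{j}/j)$ multiplies every monomial by $t$ to the power of its cardinality degree, so $\exp(-\sum_{j}t^{j}p_{j}/j)=\sum_{m}(Z_{Com}^{-1})_{m}\,t^{m}$ with $(Z_{Com}^{-1})_{m}$ the cardinality-degree-$m$ part. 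Multiplying by $(1-t)^{-1}=\sum_{a}t^{a}$ turns the extraction of the $t^{k-1}$-coefficient into the partial sum $\sum_{m\le k-1}(Z_{Com}^{-1})_{m}=(Z_{Com}^{-1})_{\le k-1}$, precisely the truncation in \eqref{hookprop}; this gives $\sum_{n}Z_{\Lambda^{k-1}\mathrm{std}_{n}}=(-1)^{k-1}\bigl(Z_{Com}(Z_{Com}^{-1})_{\le k-1}-1\bigr)$.

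Finally I would restore the suspension factor $q^{k-2}$ and simplify the sign via $q^{k-2}(-1)^{k-1}=-(-q)^{k-2}$, converting the expression into $(-q)^{k-2}-(-q)^{k-2}(Z_{Com}^{-1})_{\le k-1}Z_{Com}$, which is exactly \eqref{hookprop} after inserting $Z_{Com}=\exp(\sum_{i}p_{i}/i)$. I expect the main obstacle to be the sign and normalization bookkeeping rather than any single hard computation: one must correctly combine the $(-1)^{k-1}$ from the exterior-power generating function, the $q^{k-2}$ from the plain suspension, and the two exceptional $n=0$ contributions that together produce the free-standing $(-q)^{k-2}$ term. The one genuinely load-bearing idea is the observation that $p_{j}\mapsto t^{j}p_{j}$ converts the auxiliary exterior-degree variable $t$ into the cardinality grading, which is what makes the truncation $(\,\cdot\,)_{\le k-1}$ appear.
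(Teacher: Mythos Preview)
Your argument is correct and reaches the same endpoint as the paper, but the route is genuinely different after the shared starting observation $V_{(n-k+1,1^{k-1})}\cong\Lambda^{k-1}\mathrm{std}_n$ (the paper's Lemma~\ref{lemma3.3}). The paper proceeds by telescoping $\Lambda^{k-1}V_{(n-1,1)}$ into an alternating sum of $\Lambda^{j}W_{n}$'s (Corollaries~\ref{cor3.4}--\ref{3.5}), recognizes each $\Lambda^{j}W_{n}$ as the induced module $\mathrm{Ind}_{\Sigma_{j}\times\Sigma_{n-j}}^{\Sigma_{n}}V_{(1^{j})}\otimes V_{(n-j)}$ (Lemma~\ref{lemma3.2}), applies \eqref{cis:prod}, and then matches the resulting double sum against the claimed formula by a separate case analysis for cardinality $n<k$ and $n\ge k$. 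You instead package all exterior powers at once via the characteristic polynomial $\det(1-tP_{\sigma})=\prod_{j}(1-t^{j})^{\ell_{j}(\sigma)}$, sum over $n$ by the exponential formula, and then extract a single $t^{k-1}$-coefficient; the factor $(1-t)^{-1}$ does in one stroke what the paper's telescoping and case split do, and your observation that $p_{j}\mapsto t^{j}p_{j}$ identifies $t$-degree with cardinality degree is exactly what produces the truncation $(\,\cdot\,)_{\le k-1}$ naturally. The paper's approach is more transparently representation-theoretic (every step is an explicit isomorphism of $\Sigma_{n}$-modules), while yours is shorter, avoids Lemmas~\ref{lemma3.2}, \ref{lemma3.3}'s corollaries, and the two-case verification, and makes the appearance of the truncation conceptually inevitable rather than a pattern to be checked.
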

\noindent  We will prove this proposition with the following well-known facts and lemmas. First, let $W_{n}=\mathbb{Q}[\underline{n}]$, where $\underline{n}=\{1,2,...,n\}$, be the canonical $n$-dimensional representation of $\Sigma_{n}$. Then $W_{n}$ can be decomposed in the following way: $W_{n}=V_{(n-1,1)}\oplus V_{(n)}$ where $V_{(n-1,1)}$ is the $(n-1)$ dimensional representation and $V_{(n)} = \mathbb{Q}$ is the one-dimensional trivial representation. 

\begin{lemma} \label{lemma3.2}
For $n\geq k \geq 0$,
$\wedge^{k}W_{n}= \mathrm{Ind}_{\Sigma_{k}\times \Sigma_{n-k}}^{\Sigma_{n}} V_{(1^{k})} \otimes V_{(n-k)}$.

\end{lemma}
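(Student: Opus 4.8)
The plan is to decompose the exterior power $\wedge^k W_n$ into irreducibles and then recognize the right-hand side as an induced representation via a suitable branching/Pieri argument. The key observation is that $W_n = \mathbb{Q}[\underline{n}]$ is the permutation representation on the $n$-element set, so that $\wedge^k W_n$ has a natural basis indexed by $k$-element subsets $\{i_1 < i_2 < \cdots < i_k\} \subseteq \underline{n}$, with $\Sigma_n$ acting by permuting these subsets up to sign. Concretely, for a permutation $\sigma$, the action on a basis vector $e_{i_1}\wedge\cdots\wedge e_{i_k}$ sends it to $\pm e_{\sigma(i_1)}\wedge\cdots\wedge e_{\sigma(i_k)}$, where the sign is the signature of the permutation needed to reorder the image indices increasingly.

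First I would identify the stabilizer of the standard ordered basis vector $v_0 = e_1 \wedge e_2 \wedge \cdots \wedge e_k$. A permutation $\sigma$ fixes the underlying set $\{1,\dots,k\}$ precisely when $\sigma \in \Sigma_k \times \Sigma_{n-k}$ (permuting the first $k$ indices among themselves and the last $n-k$ among themselves), and in that case the sign picked up on $v_0$ is exactly the signature of the $\Sigma_k$-part. This shows that the cyclic $\Sigma_n$-submodule generated by $v_0$ is the induced representation $\mathrm{Ind}_{\Sigma_k \times \Sigma_{n-k}}^{\Sigma_n} \bigl(V_{(1^k)} \otimes V_{(n-k)}\bigr)$, since the isotropy representation on $v_0$ is the sign character on $\Sigma_k$ tensored with the trivial character on $\Sigma_{n-k}$, which is precisely $V_{(1^k)} \otimes V_{(n-k)}$. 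A dimension count confirms this submodule is all of $\wedge^k W_n$: the induced module has dimension $\binom{n}{k}$, matching $\dim \wedge^k W_n$.

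The main step, and the only place requiring genuine care, is verifying that the sign convention in the induction matches the exterior-algebra sign: one must check that the transposition $(i\,j)$ acting on a wedge of basis vectors produces exactly the sign prescribed by the antisymmetry of $\wedge$, and that this agrees with the sign representation $V_{(1^k)}$ on the stabilizer subgroup $\Sigma_k$. This is a routine but essential bookkeeping verification. Alternatively, one can bypass the explicit sign check by computing characters: the trace of $\sigma$ on $\wedge^k W_n$ is the $k$-th elementary-symmetric-function evaluation in the eigenvalues of $\sigma$ on $W_n$, while the character of the induced module is computed by the standard induction formula; comparing the two gives the identity directly. I would present the orbit/stabilizer argument as the primary proof, since it is cleaner and the sign check is the only genuine obstacle, leaving the character computation as an optional cross-check.
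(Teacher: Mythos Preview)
Your proposal is correct and matches the paper's proof essentially line for line: the paper defines the explicit $\Sigma_n$-map $\sigma\otimes\mathbb{1}\mapsto e_{\sigma(1)}\wedge\cdots\wedge e_{\sigma(k)}$ from the induced module to $\wedge^k W_n$, checks well-definedness by exactly the sign computation on $(\alpha,\beta)\in\Sigma_k\times\Sigma_{n-k}$ that you flag as the ``main step requiring genuine care,'' and concludes by the same $\binom{n}{k}$ dimension count. Your orbit/stabilizer phrasing and the paper's explicit-map phrasing are two descriptions of the same argument.
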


\begin{proof}
First recall that $V_{(1^{k})}$ is the sign representation of $ \Sigma_{k}$ and that $V_{(n-k)}$ is the trivial representation of $ \Sigma_{n-k} $. Also, note that $\wedge^{k} W$ and $\mathrm{Ind}_{\Sigma_{k}\times \Sigma_{n-k}}^{\Sigma_{n}} V_{(1^{k})} \otimes V_{(n-k)}$ have the same dimension, namely $\binom{n}{k}$.
To start, let $e_{1}, e_{2},...,e_{n}$ be the usual basis of $W_{n}$. Now examine how $\Sigma_{n}$ acts on a vector $e_{i_{1}}\wedge e_{i_{2}}\wedge ... \wedge e_{i_{k}} \in W_{n}$. For $\sigma \in \Sigma_{n}$, $\sigma(e_{i_{1}}\wedge e_{i_{2}}\wedge ... \wedge e_{i_{k}}) = e_{\sigma(i_{1})}\wedge e_{\sigma(i_{2})}\wedge ... \wedge e_{\sigma(i_{k})}$. By definition, $\mathrm{Ind}_{\Sigma_{k}\times \Sigma_{n-k}}^{\Sigma_{n}} V_{(1^{k})} \otimes V_{(n-k)} = \mathbb{Q}[\Sigma_{n}] \otimes_{\mathbb{Q}[\Sigma_{k}\times \Sigma_{n-k}]} V_{(1^{k})} \otimes V_{(n-k)}$. Define $$I_{(k,n-k)}:\mathbb{Q}[\Sigma_{n}] \otimes_{\mathbb{Q}[\Sigma_{k}\times \Sigma_{n-k}]} V_{(1^{k})} \otimes V_{(n-k)} \rightarrow \wedge^{k} W_{n}$$ by $I_{(k,n-k)}(\sigma \otimes \mathbb{1}) \mapsto \sigma  (e_{1}\wedge ... \wedge e_{k}) = e_{\sigma(1)}\wedge e_{\sigma(2)}\wedge...\wedge e_{\sigma(k)}$. We claim this is the desired isomorphism. First, we will show that it is well defined. Let $(\alpha, \beta) \in \Sigma_{k} \times \Sigma_{n-k} $. Then $I_{(k,n-k)}(\sigma \cdot (\alpha, \beta) \otimes \mathbb{1}) = e_{\sigma(\alpha(1))} \wedge e_{\sigma(\alpha(2))} \wedge ... \wedge e_{\sigma(\alpha(k))} = (-1)^{\mid \alpha \mid} e_{\sigma(1)} \wedge e_{\sigma(2)} \wedge .... \wedge e_{\sigma(k)} = (-1)^{\mid \alpha \mid} \sigma \otimes \mathbb{1}$. On the other hand, $I(\sigma \otimes (\alpha,\beta) \cdot \mathbb{1}) = \sigma \otimes (-1)^{\mid \alpha \mid} \mathbb{1} = (-1)^{\mid \alpha \mid} \sigma \otimes \mathbb{1}$. Therefore $I$ is well defined. As previously mentioned, these two spaces have the same dimension and by construction $I_{(k,n-k)}$ is surjective and therefore $I_{(k,n-k)}$ is bijective. 

\end{proof}

\begin{lemma} \label{lemma3.3}
For $n > k$, one has an isomorphism of $\Sigma_{n}$-modules:
$V_{(n-k, 1^{k})}=\wedge^{k} V_{(n-1,1)}$.
\end{lemma}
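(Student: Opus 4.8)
The plan is to prove the isomorphism $V_{(n-k,1^k)} \cong \wedge^k V_{(n-1,1)}$ by combining the previous lemma with a standard splitting of exterior powers of a direct sum. The key structural input is the decomposition $W_n = V_{(n-1,1)} \oplus V_{(n)}$, where $V_{(n)}$ is the one-dimensional trivial representation. Since exterior powers turn direct sums into a sum over ways of distributing the factors, I would write
\begin{equation*}
\wedge^k W_n = \wedge^k\bigl(V_{(n-1,1)} \oplus V_{(n)}\bigr) = \bigl(\wedge^k V_{(n-1,1)}\bigr) \oplus \bigl(\wedge^{k-1} V_{(n-1,1)} \otimes \wedge^1 V_{(n)}\bigr),
\end{equation*}
all higher terms vanishing because $V_{(n)}$ is one-dimensional so $\wedge^j V_{(n)} = 0$ for $j \geq 2$. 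Since $V_{(n)}$ is the trivial representation, tensoring by it is the identity, so the second summand is just $\wedge^{k-1} V_{(n-1,1)}$. This gives the recursion $\wedge^k W_n = \wedge^k V_{(n-1,1)} \oplus \wedge^{k-1} V_{(n-1,1)}$ as $\Sigma_n$-modules.

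Next I would identify the two exterior powers of $W_n$ using Lemma~\ref{lemma3.2}, which gives $\wedge^k W_n = \mathrm{Ind}_{\Sigma_k \times \Sigma_{n-k}}^{\Sigma_n} V_{(1^k)} \otimes V_{(n-k)}$. By the Pieri rule (or Young's rule for inducing from a sign-times-trivial representation of a Young subgroup), this induced module decomposes into a multiplicity-free sum of hook-type irreducibles. Concretely, inducing $V_{(1^k)} \boxtimes V_{(n-k)}$ up to $\Sigma_n$ yields $V_{(n-k,1^k)} \oplus V_{(n-k+1,1^{k-1})}$: these are the only partitions of $n$ obtained by adding a horizontal strip of size $n-k$ to the single column $(1^k)$, equivalently adding a vertical strip to the single row $(n-k)$. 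Applying the same identity with $k$ replaced by $k-1$ gives $\wedge^{k-1} W_n = V_{(n-k+1,1^{k-1})} \oplus V_{(n-k+2,1^{k-2})}$.

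Now I would set up a downward induction on $k$ to isolate $\wedge^k V_{(n-1,1)}$. The recursion from the first step reads $\wedge^k V_{(n-1,1)} = \wedge^k W_n \ominus \wedge^{k-1} V_{(n-1,1)}$ in the representation ring, and iterating this telescopes into an alternating sum $\wedge^k V_{(n-1,1)} = \sum_{j=0}^{k} (-1)^{k-j}\,\wedge^j W_n$. Substituting the Pieri decomposition of each $\wedge^j W_n = V_{(n-j,1^j)} \oplus V_{(n-j+1,1^{j-1})}$ (with the convention that the second summand is absent when $j=0$, and reading $V_{(n)}$ for $\wedge^0 W_n$), the consecutive hook representations cancel in pairs along the telescope, leaving exactly the single surviving term $V_{(n-k,1^k)}$. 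This establishes the claimed equality in the representation ring, hence an isomorphism of $\Sigma_n$-modules since both sides are genuine (non-virtual) representations.

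The main obstacle is justifying the Pieri/Young decomposition $\mathrm{Ind}_{\Sigma_k \times \Sigma_{n-k}}^{\Sigma_n} V_{(1^k)} \otimes V_{(n-k)} \cong V_{(n-k,1^k)} \oplus V_{(n-k+1,1^{k-1})}$ cleanly, since this is where all the representation-theoretic content lives; the exterior-power splitting and the telescoping cancellation are formal once that decomposition is in hand. I expect the cleanest writeup either cites the Pieri rule directly from \cite{fulton} or verifies the two-term decomposition by a short character computation, after which the induction is routine bookkeeping. One must also check the boundary cases ($j = 0$ and $j = 1$, where hook partitions degenerate) so that the telescoping sum terminates correctly and no spurious term survives.
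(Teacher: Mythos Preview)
Your argument is correct. Note, however, that the paper does not actually prove this lemma: it simply cites it as Exercise~4.6 in \cite{fulton}. So there is no ``paper's proof'' to compare against, and your write-up supplies what the paper omits.

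A few remarks on the route you chose. The splitting $\wedge^k W_n = \wedge^k V_{(n-1,1)} \oplus \wedge^{k-1} V_{(n-1,1)}$ and the telescoped alternating sum $\wedge^k V_{(n-1,1)} = \sum_{j=0}^k (-1)^{k-j}\wedge^j W_n$ are exactly the contents of Corollaries~\ref{cor3.4} and~\ref{3.5}, which in the paper are placed \emph{after} Lemma~\ref{lemma3.3}. Since those corollaries do not in fact rely on Lemma~\ref{lemma3.3} (only on the direct-sum decomposition of $W_n$), there is no circularity, but you are effectively reordering the exposition: you prove 3.4 and 3.5 first, then feed them back through Pieri to obtain 3.3. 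That is a legitimate and self-contained path.

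Once you have the Pieri decomposition $\wedge^j W_n \cong V_{(n-j,1^j)} \oplus V_{(n-j+1,1^{j-1})}$, the full telescope is more work than needed. Straight induction on $k$ is cleaner: the base case $k=0$ (or $k=1$) is trivial, and for the inductive step the hypothesis gives $\wedge^{k-1} V_{(n-1,1)} = V_{(n-k+1,1^{k-1})}$, which cancels directly against one of the two Pieri summands of $\wedge^k W_n$, leaving $V_{(n-k,1^k)}$. This avoids tracking the alternating sum and the boundary conventions at $j=0,1$ that you flagged as needing care. The Pieri input itself is standard and is exactly the kind of fact \cite{fulton} provides, so citing it is appropriate.
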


This lemma is a standard exercise in representation theory \cite[Exercise 4.6]{fulton}. 

\begin{corollary} \label{cor3.4}
One has an isomorphism of $\Sigma_{n}$-modules:
$\wedge^{k}W_{n} = \wedge^{k} V_{(n-1, 1)} \oplus \wedge^{k-1} V_{(n-1, 1)}$.
\end{corollary}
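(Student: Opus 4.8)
The plan is to leverage the decomposition $W_{n}=V_{(n-1,1)}\oplus V_{(n)}$ recalled just before Lemma~\ref{lemma3.2}, where $V_{(n)}=\mathbb{Q}$ is the one-dimensional trivial representation, together with the standard natural isomorphism expressing the exterior power of a direct sum. Explicitly, for graded (or ungraded) $\Sigma_{n}$-modules $A$ and $B$ one has an isomorphism of $\Sigma_{n}$-modules
\[
\wedge^{k}(A\oplus B)\;\cong\;\bigoplus_{i+j=k}\wedge^{i}A\otimes\wedge^{j}B .
\]
This isomorphism is natural in $A$ and $B$, hence $\Sigma_{n}$-equivariant, so it is exactly the tool we need. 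I would simply instantiate it with $A=V_{(n-1,1)}$ and $B=V_{(n)}=\mathbb{Q}$.

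The key observation is that $B$ is one-dimensional, so $\wedge^{j}B=0$ for all $j\geq 2$. Consequently the sum over $i+j=k$ collapses to the two terms $j=0$ and $j=1$, giving
\[
\wedge^{k}W_{n}\;\cong\;\bigl(\wedge^{k}V_{(n-1,1)}\otimes\wedge^{0}V_{(n)}\bigr)\;\oplus\;\bigl(\wedge^{k-1}V_{(n-1,1)}\otimes\wedge^{1}V_{(n)}\bigr).
\]
Here $\wedge^{0}V_{(n)}=\mathbb{Q}$ and $\wedge^{1}V_{(n)}=V_{(n)}=\mathbb{Q}$ are both the trivial representation, and tensoring any $\Sigma_{n}$-module with the trivial representation returns the same module. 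Therefore the two factors simplify to $\wedge^{k}V_{(n-1,1)}$ and $\wedge^{k-1}V_{(n-1,1)}$ respectively, yielding the claimed identity $\wedge^{k}W_{n}=\wedge^{k}V_{(n-1,1)}\oplus\wedge^{k-1}V_{(n-1,1)}$.

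There is no substantial obstacle here; the statement is essentially a formal consequence of the exterior-algebra splitting for a direct sum specialized to a one-dimensional trivial summand. The only points that require a line of justification are (i) that the exterior-power-of-a-direct-sum isomorphism is equivariant, which follows from its naturality, and (ii) that the surviving tensor factors $\wedge^{0}V_{(n)}$ and $\wedge^{1}V_{(n)}$ are trivial one-dimensional representations and hence drop out of the tensor products. I would state the direct-sum formula, plug in the two modules, and record these two remarks to finish.
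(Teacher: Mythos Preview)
Your proposal is correct and is essentially the same argument as the paper's: both use the decomposition $W_{n}=V_{(n-1,1)}\oplus V_{(n)}$ and the exterior-power-of-a-direct-sum formula, with the one-dimensionality of $V_{(n)}$ collapsing the sum to two terms. You simply spell out in more detail the steps that the paper's one-line proof leaves implicit.
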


\begin{proof}
$\wedge ^{k}W_{n} = \wedge^{k}(V_{(n-1,1)} \oplus V_{(n)}) = \wedge^{k}(V_{(n-1,1)}) \oplus \wedge^{k-1}(V_{(n-1,1)}) \otimes V_{(n)}$, where $V_{(n)}$ is just the trivial representation and thus we have our desired isomorphism.
\end{proof}

\begin{corollary} \label{3.5}
One has an isomorphism of virtual $\Sigma_{n}$-modules:
$\wedge^{k}V_{(n-1,1)} = \sum_{i=0}^{k}(-1)^{i}\wedge^{k-i}W_{n}$
\end{corollary}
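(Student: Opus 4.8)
The plan is to read Corollary \ref{cor3.4} as a first-order linear recursion in the representation ring of virtual $\Sigma_n$-characters and simply invert it. Write $w_k := [\wedge^{k} W_{n}]$ and $v_k := [\wedge^{k} V_{(n-1,1)}]$ for the corresponding classes in $R(\Sigma_n)$, with the conventions that $\wedge^{0}$ is the trivial module and $\wedge^{j} = 0$ for $j < 0$. Corollary \ref{cor3.4} then reads $w_k = v_k + v_{k-1}$, and the assertion $\wedge^{k} V_{(n-1,1)} = \sum_{i=0}^{k}(-1)^{i}\wedge^{k-i}W_{n}$ is precisely the solution of this recursion, interpreted as an identity of virtual modules (equivalently, of characters).

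The cleanest route is via generating functions in $R(\Sigma_n)[[z]]$. Summing the recursion against $z^k$ and noting that the $v_{k-1}$ term contributes an extra factor of $z$, I get
\[
\sum_{k\geq 0} w_k z^k = (1+z)\sum_{k\geq 0} v_k z^k .
\]
Multiplying both sides by $(1+z)^{-1} = \sum_{i\geq 0}(-1)^{i}z^{i}$ and extracting the coefficient of $z^{k}$ yields $v_k = \sum_{i+j=k}(-1)^{i}w_j = \sum_{i=0}^{k}(-1)^{i}w_{k-i}$, which is exactly the claim. If one prefers to avoid formal power series, the same result follows by induction on $k$: the base case $k=0$ is the tautology $\wedge^{0}V_{(n-1,1)} = \wedge^{0}W_{n}$, and for the inductive step I would solve Corollary \ref{cor3.4} as $v_k = w_k - v_{k-1}$, substitute the inductive formula for $v_{k-1}$, and reindex the resulting sum so that consecutive terms telescope into $\sum_{i=0}^{k}(-1)^{i}w_{k-i}$.

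There is no genuine obstacle here: the only input is Corollary \ref{cor3.4}, and everything else is the formal inversion of a first-order recursion. The single point worth flagging is that the signed sum on the right-hand side only makes sense after passing to virtual modules, so the statement is correctly phrased as an identity in the representation ring rather than as an honest direct-sum decomposition. From this viewpoint one can also verify the identity directly on characters: for $\sigma\in\Sigma_n$ the exterior-power generating function is $\sum_{k}\mathrm{tr}(\sigma\mid\wedge^{k}M)\,z^{k}=\det(1+z\,\sigma\mid M)$, and since $W_{n}=V_{(n-1,1)}\oplus V_{(n)}$ with $V_{(n)}$ the trivial one-dimensional summand, $\det(1+z\,\sigma\mid W_{n}) = (1+z)\det(1+z\,\sigma\mid V_{(n-1,1)})$, reproducing the factor $(1+z)$ and hence the inversion above.
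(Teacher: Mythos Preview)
Your proof is correct and follows essentially the same approach as the paper: both rewrite Corollary~\ref{cor3.4} as the recursion $v_k = w_k - v_{k-1}$ in the representation ring and then iterate (the paper phrases this as ``apply Corollary~\ref{cor3.4} iteratively,'' while you package the same iteration as inverting a factor of $(1+z)$ in a generating function). The extra remarks about the determinantal character formula are a nice sanity check but are not needed for the argument.
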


\begin{proof}
$ \wedge^{k}V_{(n-1,1)} = \wedge^{k}W_{n} - \wedge^{k-1}V_{(n-1,1)}$ by Corollary 3.4. 
We apply the same corollary to $\wedge^{k-1}V_{(n-1,1)}$ again and we have $ \wedge^{k}V_{(n-1,1)} = \wedge^{k}W_{n} - \wedge^{k-1}V_{(n-1,1)} = \wedge^{k}W_{n} - \wedge^{k-1}W_{n} + \wedge^{k-2}V_{(n-1,1)}$. We can apply Corollary 3.4 iteratively to obtain the desired isomorphism.
\end{proof}

\noindent Now we are ready to prove Proposition 3.1.
\begin{proof}[Proof of Propsition 3.1]
Let \begin{equation*}
\mathcal{H}(n) = 
	\begin{cases}
	0, & n<k; \\
	V_{(n-k+1,1^{k-1})}, &\text{otherwise.}
	\end{cases}
\end{equation*}
In order to prove Proposition 3.1, it is sufficient to show that 
\begin{equation}
Z_{\mathcal{H}}(p_{1}, p_{2}, p_{3},...) = (-1)^{k-2} - (-1)^{k-2}\left(\exp\left(-\sum_{i=1}^{\infty}\frac{p_{i}}{i}\right)\right)_{\leq k-1}\left(\exp\left(\sum_{i=1}^{\infty}\frac{p_{i}}{i}\right)\right). \tag{3.2}\label{hooknodeg}
\end{equation}
For $n \geq k$, one has
\begin{align*}
\mathcal{H}(n) &= V_{(n-k+1,1^{k-1})}&&\text{(by definition)}\\
&= \wedge^{k-1}V_{(n-1,1)}&&\text{(by Lemma \ref{lemma3.3})}\\
&= \sum_{i=0}^{k-1}(-1)^{i}\wedge^{k-1-i}W_{n}&&\text{(by Corollary \ref{3.5})}\\
&= \sum_{i=0}^{k-1}(-1)^{i}\mathrm{Ind}_{\Sigma_{k-1-i} \times \Sigma_{n-k+1+i}}^{\Sigma_{n}}V_{(1^{k-1-i})}\otimes V_{(n-k+1+i)} &&\text{(by Lemma 3.2)}\\
&= (-1)^{k-1}\sum_{j=0}^{k-1} \mathrm{Ind}_{\Sigma_{j} \times \Sigma_{n-j}}^{\Sigma_{n}} V_{(1)^{j}} \otimes V_{(n-j)}. &&\text{(by taking $j = k-i-1$)}
\end{align*}
Next we apply \eqref{cis:prod}.

\begin{equation}
Z_{\mathcal{H}(n)}=Z_{V_{(n-k+1,1^{k-1})}} = (-1)^{k-1}\sum_{j=0}^{k-1}(-1)^{j}Z_{V_{(1^{j})}}\cdot Z_{V_{(n-j)}}. \tag{3.3} \label{3.3}
\end{equation}
Thus, 
\begin{equation}
Z_{\mathcal{H}}=(-1)^{k-1} \sum_{n\geq k} \sum_{j=0}^{k-1}(-1)^{j}Z_{V_{(1^{j})}} \cdot Z_{V_{(n-j)}}.\tag{3.4} \label{propproofequ}
\end{equation}
Note that $Z_{V_{(1^{j})}}$ is the cycle index sum for the sign representation and $Z_{V_{(n-j)}}$ is the cycle index sum for the trivial representation. We claim that  \eqref{propproofequ} is equal to \eqref{hooknodeg}. We will prove this claim in two cases: when cardinality $n<k$ and when cardinality $n \geq k$. 

We will first do the case when $n<k$. Clearly \eqref{propproofequ} is equal to 0 when $n<k$ as the sum starts when $n \geq k$ and thus has no terms. When $n<k$, \eqref{hooknodeg} is also 0 since the exponentials are inverses to one another:

\begin{multline*}
(-1)^{k-2} - (-1)^{k-2}\left(\exp\left(-\sum_{i=1}^{\infty}\frac{p_{i}}{i}\right)\right)_{\leq k-1}\left(\exp\left(\sum_{i=1}^{\infty}\frac{p_{i}}{i}\right)\right) =_{\leq k-1}\\ (-1)^{k-2} - (-1)^{k-2}\left(\exp\left(-\sum_{i=1}^{\infty}\frac{p_{i}}{i}\right)\right)\left(\exp\left(\sum_{i=1}^{\infty}\frac{p_{i}}{i}\right)\right) = 0.
\end{multline*}

\noindent Now we look at the case when the cardinality degree $n\geq k$. It follows from \eqref{cis:com} that $$\sum_{n=0}^{\infty}Z_{V_{(1^{n})}} = \exp\left(\sum_{i=0}^{\infty}\frac{(-1)^{i-1}p_{i}}{i}\right).$$ By replacing $p_{i} \mapsto (-1)^{i}p_{i}$, we get $$\sum_{n=0}^{\infty}(-1)^{n}Z_{V_{(1^{n})}} = \exp\left(\sum_{i=0}^{\infty}\frac{-p_{i}}{i}\right).$$ Then, $$\sum_{n=0}^{k-1}(-1)^{n}Z_{V_{(1^{n})}} = \exp\left(\sum_{i=0}^{\infty}\frac{-p_{i}}{i}\right)_{\leq k-1}.$$ We also know that $$\sum_{n=0}^{\infty}Z_{V_{(n)}} = \exp\left(\sum_{i=0}^{\infty}\frac{p_{i}}{i}\right).$$ From these formulas, one can easily see that in cardinality $n \geq k$, \eqref{hooknodeg} and \eqref{propproofequ} are both equal to \eqref{3.3}. Thus in arity $n$, \eqref{hooknodeg} is equal to \eqref{propproofequ}, completing the proof.
\end{proof}

\section{Proof of Theorem 1} \label{section4}

First we compute the plethsym of $\mathcal{L}$ie and $\mathcal{H}_{1}^{(k)}$ using \eqref{plethysm:1} and \eqref{plethysm:2}. 
\begin{multline}
Z_{\mathcal{L}ie \circ \mathcal{H}_{1}^{(k)}}(q;p_{1}, p_{2},p_{3},...)= \\
\sum_{i=1}^{\infty}\frac{-\mu(i)}{i}\ln\left(1-(-q)^{i(k-2)}+(-q)^{i(k-2)}\left[\exp\left(-\sum_{j=1}^{\infty}\frac{p_{ij}}{j}\right)\right]_{\leq i(k-1)}\left[\exp\left(\sum_{j=1}^{\infty}\frac{p_{ij}}{j}\right)\right]\right) .\tag{4.1}\label{lie:hook}
\end{multline}
Next we use \eqref{graded:susp} to compute the $\{d-1\}$ suspension of \eqref{lie:hook}:
\begin{multline}
Z_{(\mathcal{L}ie \circ \mathcal{H}_{1}^{(k)})\{d-1\}}(q;p_{1}, p_{2},p_{3},...)=\\ q^{1-d}\sum_{i=1}^{\infty} \frac{-\mu(i)}{i}\ln\Bigg(1-(-q)^{i(k-2)}+\\ (-q)^{i(k-2)}\bigg[e^{-\sum_{i=1}^{\infty}\frac{(-1)^{d-1}(-q)^{ij(d-1)}p_{ij}}{j}}\bigg]_{\leq i(k-1)}\bigg[e^{\sum_{i=1}^{\infty}\frac{(-1)^{d-1}(-q)^{ij(d-1)}p_{ij}}{j}}\bigg]\Bigg). \tag{4.2} \label{lie,hook,susp}
\end{multline}
Now we will simply add the $\mathbb{1}$, the trivial representation of $\Sigma_{1}$, to \eqref{lie,hook,susp}:

\begin{multline}
Z_{\mathbb{1}\oplus(\mathcal{L}ie \circ \mathcal{H}_{1}^{(k)})\{d-1\}}(q;p_{1}, p_{2},p_{3},...)=\\p_{1} + q^{1-d}\sum_{i=1}^{\infty} \frac{-\mu(i)}{i}\ln\Bigg(1-(-q)^{i(k-2)}+\\ (-q)^{i(k-2)}\bigg[e^{-\sum_{i=1}^{\infty}\frac{(-1)^{d-1}(-q)^{ij(d-1)}p_{ij}}{j}}\bigg]_{\leq i(k-1)}\bigg[e^{\sum_{i=1}^{\infty}\frac{(-1)^{d-1}(-q)^{ij(d-1)}p_{ij}}{j}}\bigg]\Bigg). \tag{4.3} \label{lie,hook,susp,id}
\end{multline}
Finally, we again use \eqref{plethysm:1} and \eqref{plethysm:2} to compute the graded composition product of $Com$ with \eqref{lie,hook,susp,id} to get an explicit formula.

\begin{multline}
Z_{Com \circ (\mathbb{1}\oplus(\mathcal{L}ie \circ \mathcal{H}_{1}^{(k)})\{d-1\})}(q;p_{1}, p_{2},p_{3},...)=\\ \exp\Bigg[\sum_{l=1}^{\infty}\frac{1}{l}\Bigg(p_{l}+(-1)^{d-1}(-q)^{l(1-d)}\sum_{i=1}^{\infty}\frac{-\mu(i)}{i}\ln\bigg(1-(-q)^{li(k-2)}\\+(-q)^{li(k-2)}\bigg[e^{-\sum_{j=1}^{\infty}\frac{(-1)^{d-1}(-q)^{lij(d-1)}p_{lij}}{j}}\bigg]_{\leq li(k-1)}\bigg[e^{\sum_{j=1}^{\infty}\frac{(-1)^{d-1}(-q)^{lij(d-1)}p_{lij}}{j}}\bigg]\bigg)\Bigg)\Bigg]. \tag{4.4}\label{com,lie,hook,susp}
\end{multline}
Recall that  $E_{m}(y) = \frac{1}{m} \sum_{i\mid m}\left(\mu(i)y^{\frac{m}{i}}\right)$. Using the substitution $m=li$ and the fact that the exponential function and the natural logarithm are inverses to one another, one obtains \eqref{equ:bigthm}.
\begin{flushright}
	$\Box$
\end{flushright}

\section{Refinement}

In \cite{D_T}, the homology groups $H_{\ast}\mathcal{M}_{d}^{(k)}(n)$ are described as linear combinations of certain products of iterated brackets, where there are two types of brackets, long and short. Long brackets have exactly $k$ inputs and cannot have any other brackets as elements inside them. Short bracket have exactly 2 inputs, either of which may be a long or short bracket.

$Example$: Let $n=7$, $k=3$. Two examples of homology classes one has are: $[\{x_{1},x_{3},x_{6}\},\{x_{2}, x_{4},x_{5}\}]\cdot x_{7}\in H_{5d-3}\mathcal{M}_{d}^{(3)}(7)$, and $[[[\{x_{1},x_{3},x_{5}\},x_{2}],x_{4}],x_{6}]\cdot x_{7} \in H_{5d-4}\mathcal{M}_{d}^{(3)}(7)$.

Geometrically, these classes can be viewed as products of spheres. For example, when $n=4$ and $k=3$, one homology class is $[\{x_{1}, x_{2}, x_{4}\},x_{3}] \in H_{3d-2}\mathcal{M}_{d}^{(3)}(4)$. This class is represented by $S^{2d-1}\times S^{d-1}$:

$$|x_{1}|^{2} + |x_{2}|^{2} + |x_{4}|^{2} = \epsilon^{2}, \quad 
x_{1}+x_{2}+x_{4}=0, \quad |x_{3}|^{2}=1, \quad (x_{1},x_{2},x_{3},x_{4}) \in (\mathbb{R}^{d})^{\times 4},$$ where $\epsilon \ll 1$.

The symmetric sequence $H_{\ast}\mathcal{M}_{d}^{(k)}$ has a left module structure over the homology of the little $d$-disks operad, which is the operad of Poisson algebras \cite{fresse}. The short bracket is the Lie operation in this operad.\footnote{For the notion of operad and left module over an operad, see for example \cite{fresse}}

The number of long and short brackets are additional gradings that we consider on $H_{\ast}\mathcal{M}_{d}^{(k)}(n)$. We add the variable $u$ to be responsible for the number of short brackets grading and the variable $w$ to be responsible for the number of long brackets grading in the graded trace used for the cycle index sum \eqref{def:cycle}. The sequence $Com$ does not contribute to these additional gradings and thus \eqref{cis:com} remains unchanged in the refinement. The graded suspension does not interact with the long and short brackets and thus \eqref{graded:susp} also remains unchanged. However, there are short brackets in $\mathcal{L}ie$. In cardinality k, there are always $k-1$ (short) brackets, which is why we divide by $u$ and replace $p_{i}$ by $u^{i}p_{i}$ in the formula below. By abuse of notation, we also denote by $Z_{\mathcal{L}ie}$ the cycle index sum of $\mathcal{L}ie$ with this refinement:
$$Z_{\mathcal{L}ie}(u,q;p_{1},p_{2},p_{3},...) = \sum_{i=1}^{\infty}\frac{-\mu(i)}{u}\frac{\ln(1-u^{i}p_{i})}{i}.$$

The space $\mathcal{H}_{1}^{(k)}(n)$ is a subspace of $H_{k-2}\mathcal{M}_{1}^{(k)}(n)$, defined as a subspace spanned by iterated brackets that have exactly one long bracket \cite{D_T}. This explains why we multiply by $w$ in the formula below. However, the iterated brackets of $\mathcal{H}_{1}^{(k)}(n)$ have exactly $n-k$ short brackets, which explains in the formula below why we divide by $u^{k}$ and replace $p_{i}$ by $u^{i}p_{i}$ in the refinement. Similarly we abuse notation to denote the cycle index sum of $\mathcal{H}_{1}^{(k)}(n)$ with the refinement as before by $Z_{\mathcal{H}_{1}^{(k)}}$.

\begin{multline*}
Z_{\mathcal{H}_{1}^{(k)}}(u,w,q;p_{1},p_{2},p_{3},...) =\\
 \frac{w}{u^{k}}\left((-q)^{k-2}-(-q)^{k-2}\left(\exp\left(-\sum_{i=1}^{\infty}\frac{u^{i}p_{i}}{i}\right)\right)_{ \leq k-1}\left(\exp\left(\sum_{i=1}^{\infty}\frac{u^{i}p_{i}}{i}\right)\right)\right).
 \end{multline*}

\noindent The plethysm also affects the long and short brackets and is now defined as:

$$Z_{M \circ N} = Z_{M} \ast Z_{N} = Z_{M}(u;w;q;p_{i} \mapsto p_{i} \ast Z_{N}),$$

\noindent where

$$p_{i}\ast Z_{N} = Z_{N}(u \mapsto u^{i}; \; w \mapsto w^{i}; \; q \mapsto (-1)^{i-1}q^{i};\; p_{j}\mapsto p_{ij}).$$

\begin{theorem}
For $k\geq 3$ and $d\geq 2$,
\begin{multline}
$$Z_{H_{\ast}\mathcal{M}_{d}^{(k)}}(u,w,q;p_{1},p_{2},p_{3},...) = \\
e^{(\sum_{l=1}^{\infty}\frac{p_l}{l})}\prod_{m=1}^{\infty}\Bigg(1-\frac{w^{m}}{u^{m(k-1)}}\Bigg[(-q)^{m(k-2)} - \\ (-q)^{m(k-2)} \left(e^{-\sum_{j=1}^{\infty}\frac{(-1)^{d-1}(-q)^{mj(d-1)}u^{mj}p_{mj}}{j}}\right)_{\leq m(k-1)}\left(e^{\sum_{j=1}^{\infty}\frac{(-1)^{d-1}(-q)^{mj(d-1)}u^{mj}p_{mj}}{j}}\right)\Bigg]\Bigg)^{(-1)^{d}E_{m}\left(\frac{1}{(-q)^{d-1}u}\right)}.$$ \tag{5.1} \label{refinement}
\end{multline}
\end{theorem}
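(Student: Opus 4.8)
The plan is to rerun the four-step pipeline from the proof of Theorem~1 (Section~\ref{section4}) verbatim, but feeding in the refined cycle index sums of $\mathcal{L}ie$ and $\mathcal{H}_{1}^{(k)}$ and the refined plethysm recorded just above the statement. All signs and $q$-weights are literally identical to the unrefined computation, so the only genuinely new work is tracking the powers of the two new variables $u$ and $w$ through the two plethysms. The variable $w$ is easy: it enters linearly (one long bracket in $\mathcal{H}_{1}^{(k)}$), gets raised to $w^{i}$ by the $\mathcal{L}ie$-plethysm and to $w^{li}=w^{m}$ by the $Com$-plethysm, producing the $w^{m}$ of \eqref{refinement}. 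The subtle variable is $u$.

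First I would compute the refined $Z_{\mathcal{L}ie\circ\mathcal{H}_{1}^{(k)}}$ by substituting the refined $Z_{\mathcal{H}_{1}^{(k)}}$ into the refined $Z_{\mathcal{L}ie}$. Under $p_{i}\mapsto p_{i}\ast Z_{\mathcal{H}_{1}^{(k)}}$ the rule $u\mapsto u^{i}$, $w\mapsto w^{i}$, $q\mapsto(-1)^{i-1}q^{i}$ (so that $(-q)\mapsto(-q)^{i}$) and $p_{j}\mapsto p_{ij}$ turns the log argument $1-u^{i}p_{i}$ of the refined $\mathcal{L}ie$ into $1-\frac{w^{i}}{u^{i(k-1)}}\big[(-q)^{i(k-2)}-(-q)^{i(k-2)}(\cdots)_{\leq i(k-1)}(\cdots)\big]$, i.e.\ exactly the bracket inside the product of \eqref{refinement}. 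The two points to check here are that the outer $u^{i}$ of $1-u^{i}p_{i}$ combines with the $u^{-ik}$ coming from $Z_{\mathcal{H}_{1}^{(k)}}$ to leave $u^{-i(k-1)}$, and that the truncation rescales from $\leq k-1$ to $\leq i(k-1)$ because $p_{j}\mapsto p_{ij}$ multiplies every cardinality degree by $i$. Crucially, the factor $\frac{-\mu(i)}{u\,i}$ of the refined $\mathcal{L}ie$ carries a lone $\frac1u$ that must survive every later substitution.

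Next I would apply the $\{d-1\}$ suspension (which leaves $u,w$ untouched and only substitutes the $p$'s while producing the prefactor $q^{1-d}$), add the summand $p_{1}$ coming from $\mathbb{1}$ (which carries neither kind of bracket), and finally apply the refined $Com$-plethysm to $Z_{Com}=\exp(\sum_{l}p_{l}/l)$. Under its substitution $u\mapsto u^{l}$ the surviving $\frac1u$ becomes $\frac{1}{u^{l}}$, the prefactor transforms as $q^{1-d}\mapsto(-1)^{d-1}(-q)^{l(1-d)}$ exactly as in Section~\ref{section4}, and the log argument becomes a function of $m=li$ alone that matches the bracket of \eqref{refinement}. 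The term $p_{1}\mapsto p_{l}$ exponentiates to the prefactor $e^{\sum_{l}p_{l}/l}$.

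I expect the regrouping by $m=li$ to be the crux. Collecting the coefficient of the logarithm of the $m$-th bracket gives $\sum_{i\mid m}\frac{i}{m}\cdot\frac{1}{u^{m/i}}\cdot(-1)^{d-1}(-q)^{(m/i)(1-d)}\cdot\frac{-\mu(i)}{i}=\frac{(-1)^{d}}{m}\sum_{i\mid m}\mu(i)\big(\frac{1}{(-q)^{d-1}u}\big)^{m/i}=(-1)^{d}E_{m}\!\big(\frac{1}{(-q)^{d-1}u}\big)$, and exponentiating turns each logarithm into the corresponding power, yielding the product in \eqref{refinement}. The main obstacle is precisely this accounting of the stray $\frac1u$ factors: they must fold into the \emph{argument} $\frac{1}{(-q)^{d-1}u}$ of $E_{m}$ rather than into an overall prefactor, which is exactly why $u$ appears inside $E_{m}$ in the refined exponent. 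The restriction $k\geq 3$, $d\geq 2$ is inherited from the range in which the long/short bracket description of $H_{\ast}\mathcal{M}_{d}^{(k)}$ and the integral form of \eqref{eq:iso} are valid.
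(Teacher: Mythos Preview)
Your proposal is correct and follows precisely the approach the paper indicates: rerun the four-step pipeline of Section~\ref{section4} with the refined $Z_{\mathcal{L}ie}$, $Z_{\mathcal{H}_{1}^{(k)}}$, and refined plethysm, tracking $u$ and $w$. In fact you supply considerably more detail than the paper's own proof, which consists of the single sentence ``The proof of Theorem~2 follows the same steps as the proof of Theorem~1''; your identification of the key point---that the lone $1/u$ from the refined $Z_{\mathcal{L}ie}$ becomes $1/u^{l}$ under the $Com$-plethysm and is absorbed into the argument of $E_{m}$---is exactly the computation that makes the refinement go through.
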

\noindent Note that for $k=2$ or $d=1$ we do not get a splitting but rather a filtration. The formula \eqref{refinement} can still be applied, and it computes the cycle index sum of the symmetric group action on the associated graded factor~\cite{D_T}.

The proof of Theorem 2 follows the same steps as the proof of Theorem 1.

\nocite{*}
\bibliography{CycleIndexSumBib} 
\bibliographystyle{plain}

\end{document}